\documentclass[10pt,a4paper]{amsart}
\usepackage[latin1]{inputenc}
\usepackage{amsmath,amsfonts,amssymb}
\usepackage{enumerate}
\usepackage{graphicx}
\usepackage[colorlinks=true]{hyperref}
\numberwithin{equation}{section}
\usepackage{caption}

\author{Philippe Nadeau}
\thanks{The author is supported partly by the Austrian Science Foundation (FWF), in the framework of the Wittgensteinpreis (grant Z130-N13) and the National Research Network "Analytic Combinatorics and Probabilistic Number Theory" (grant S9607-N13)}
\address{Fakult\"at f\"ur Mathematik, Universit\"at Wien, Garnisonga{\ss}e 3, A-1090 WIEN, AUSTRIA. }
\email{philippe.nadeau@univie.ac.at}
\date{}

\newtheorem{thm}{Theorem}
\newtheorem{prop}[thm]{Proposition}
\newtheorem{lemma}[thm]{Lemma}

\theoremstyle{definition}
\newtheorem{defi}[thm]{Definition}

\newcommand{\dem}{\noindent \textbf{Proof: }}
\newcommand{\demof}[1]{\noindent \textbf{Proof of~{#1}:}}
\newcommand{\findem}{\vspace{-.55cm} \begin{flushright} $\square~$
\end{flushright} \vspace{.2cm} }

\newcommand{\si}{\sigma}
\newcommand{\lsi}{{\lambda(\sigma)}}
\newcommand{\lsis}{{\lambda(\sigma^*)}}
\newcommand{\ltaus}{{\lambda(\tau^*)}}
\newcommand{\lpi}{{\lambda(\pi)}}
\newcommand{\ltau}{{\lambda(\tau)}}

\newcommand{\tspt}{{t_{\si,\tau}^\pi}}
\newcommand{\Tspt}{{\mathbf{TFPL}_{\si,\tau}^\pi}}
\newcommand{\oTspt}{{\overrightarrow{\mathbf{TFPL}}_{\si,\tau}^\pi}}
\newcommand{\otspt}{{\overrightarrow{t}_{\si,\tau}^\pi}}
\newcommand{\oPhi}{\overrightarrow{\Phi}}

\newcommand{\ssyt}{\operatorname{SSYT}}

\newcommand{\Dn}{\mathcal{D}_n}

\newcommand{\size}{\operatorname{d}}
\newcommand{\can}{\overrightarrow{\operatorname{can}}}

\newcommand{\cspt}{\mathbf{c}_{\lsi,\ltau}^{\lpi}}
\newcommand{\KT}{\mathbf{K}\mathbf{T}_{\si,\tau}^\pi}

\title[TFPL and LR coefficients]{Fully Packed Loop configurations in a Triangle and Littlewood--Richardson coefficients.}

\begin{document}
\begin{abstract}
  In this work we continue our study of Fully Packed Loop (FPL) configurations in a triangle. These are certain subgraphs on a triangular subset of $\mathbb{Z}^2$, which first arose in the study of the usual FPL configurations on a square grid.  We show that, in a special case, the enumeration of these FPLs in a triangle is given by Littlewood--Richardson coefficients. The proof consists of a bijection with Knutson--Tao puzzles.
\end{abstract}

\maketitle


\section*{Introduction}
Fully Packed Loop configurations (FPLs) are configurations on a square grid to which a certain link pattern $\pi$ is attached (see Section~\ref{sub:fpl} for definitions). As was first shown in~\cite{CKLN,Thapper}, 
FPL configurations in a Triangle (TFPLs) occur naturally when studying certain properties of the usual FPLs. The connection is explained in more detail in~\cite{TFPL1}, where in several properties of TFPLs are also shown. The study of TFPLs was initially motivated by the desire to obtain an expression for the number $A_\pi$ of FPLs with link pattern $\pi$; also, as conjectured in~\cite{Thapper} and proved in~\cite{TFPL1}, there exist linear relations between FPLs whose coefficients are defined in terms of TFPLs. These relations are in fact recurrence formulas allowing to compute all ${A}_\pi$. The famous Razumov-Stroganov correspondence \cite{ProofRS,RS-conj} gives rise to a different set of relations characterizing the $A_\pi$; they concern FPLs all on the same grid size, while the recurrence formulas express FPLs of size $n$ in terms of FPLs of size $n-1$.

Boundaries of TFPLs can be encoded by Dyck words $\si,\tau,\pi$, to which partitions $\lsi,\ltau,\lpi$ are attached; our main result (Theorem~\ref{th:main}) is:
\smallskip

\emph{\indent If $|\lsi|+|\ltau|=|\lpi|$, then the number of TFPL configurations with boundary $\si,\tau,\pi$ is given by the Littlewood-Richardson coefficient $\cspt$.}
\smallskip

 Relevant definitions can be found in Section~\ref{sec:prelim}. It is quite surprising to come across such coefficients when starting with a purely enumerative problem: objects counted by Littlewood--Richardson coefficients do not seem to appear ``by chance'', as they are usually designed to give a combinatorial interpretation of these numbers. Such is the case of Knutson--Tao puzzles, which are the objects we shall put into one-to-one correspondence with our TFPL configurations. We will see that although the map itself is quite easy to define, proving that it is indeed a bijection requires some work.

Let us give a brief outline: in Section~\ref{sec:prelim} we define FPLs both on a grid and in a triangle, and state Theorem~\ref{th:main} which is our main result. We also explain briefly the connection between FPLs and TFPLs, which gives rise to Formula~\eqref{eq:api}. Section~\ref{sec:ingredients} introduces three major ingredients in the proof of the main theorem, which is itself proved in Section~\ref{sec:bij}.
\smallskip

\emph{Note: this work was first presented at the Fpsac 2010 conference in San Francisco~\cite{NadFPLFpsac}.}


\section{Preliminaries}
\label{sec:prelim}

\subsection{Fully Packed Loop configurations and link patterns}
\label{sub:fpl}
We fix a positive integer $n$, and let $G_n$ be the square grid with $n^2$ vertices. We impose {\em periodic boundary conditions} on $G_n$, which means that we select every other external edge on the grid, starting by convention with the topmost on the left side; we number these $2n$  external edges counterclockwise. A {\em Fully Packed Loop (FPL)} configuration $F$ of size $n$ is a subgraph of $G_n$  such that each vertex of $G_n$ is incident to two edges of $F$. An example of FPL configuration is given on Figure~\ref{fig:fplexample} (left).

\begin{figure}[!ht]
\begin{center}
\includegraphics[width=0.8\textwidth]{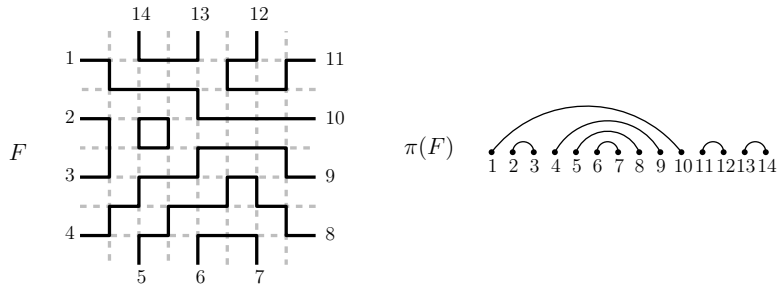}
\caption{A FPL configuration with its associated link pattern.
\label{fig:fplexample}}
\end{center}
\end{figure}

Define a {\em link pattern} $\pi$ of size $n$ as a partition of $\{1,\ldots,2n\}$ in $n$ {\em pairwise noncrossing} pairs $\{i,j\}$, which means that there are no integers $i<j<k<\ell$ such that $\{i,k\}$ and $\{j,\ell\}$ are both in $\pi$. A FPL configuration $F$ on $G_n$ naturally defines non-crossing paths between its external edges, so we can define the link pattern $\pi(F)$ as the set of pairs $\{i,j\}$ where $i,j$ label external edges which are the extremities of the same path in $F$. This is illustrated on the right of Figure~\ref{fig:fplexample}. If $\pi$ is a link pattern, we denote by ${A}_\pi$ the number of $FPL$ configurations $F$ of size $n$ such that $\pi(F)=\pi$.
 

\subsection{Words and Ferrers diagrams}
\label{sub:words}
 A link pattern $\pi$ on $\{1,\ldots,2n\}$ can be encoded by a binary word of length $2n$, where for each pair $\{i<j\}$ in $\pi$ we set $\pi_i=0$ and $\pi_j=1$. Such words $\pi$ form the following subset of $\{0,1\}^{2n}$: 
\begin{defi}[$\Dn$]
We denote by $\mathcal{D}_n$ the set of words $\si$ of length $2n$, such that $|\si|_0=|\si|_1=n$, and each prefix $u$ of $\si$ verifies $|u|_0\geq|u|_1$.
\end{defi}
 These are known as Dyck words, and are counted by the Catalan numbers $\frac{1}{n+1}\binom{2n}{n}$. Note also that if $w\in\Dn$, then it contains a factor $01$, and the removal of such a factor leaves a shorter Dyck word in $\mathcal{D}_{n-1}$.

We will identify link patterns with words in $\Dn$. There is also a bijection $\si\mapsto \lsi$ from $\Dn$ to the set of Ferrers diagrams included in the staircase shape $\delta_n=(n-1,n-2,\ldots,1,0)$: the letters of $\si$ encode the Southeast boundary of the Ferrers shape from bottom to top, cf. Figure~\ref{fig:Dn}. 

\begin{figure}[!ht]
\begin{center}
\includegraphics[width=0.8\textwidth]{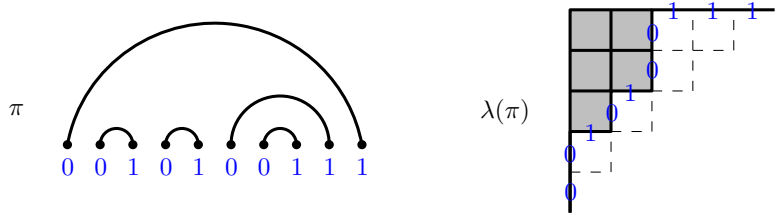}
\caption{The word $0010100111\in \mathcal{D}_5$ as a link pattern and a Ferrers diagram.
\label{fig:Dn}}
\end{center}
\end{figure}

The {\em degree} $\size(\si)$ of $\si$ is the number of indices $i<j$ such that $(\si_i,\si_j)=(1,0)$, and is equal to the number of boxes $|\lsi|$. For instance we have $\size(\pi)=5$ for the example of Figure~\ref{fig:Dn}. The \emph{conjugate} $\si^*$ of $\si=\si_1\cdots\si_{n}$ is the word of length $n$ defined by $\si^*_i:=1-\si_{n+1-i}$; the diagram $\lsis$ is obtained from $\lsi$ by a reflection about the main diagonal of $\delta_n$. We define the partial order $\si\leq \tau$ if $\lsi\subseteq\ltau$ in the diagram representation. 

A {\em semistandard Young tableau} of shape $\lambda$  is a filling of the boxes of $\lambda$ by positive integers, in which entries are nondecreasing across each row from left to right and increasing down each column. We denote by $\ssyt(\lambda,N)$ the number of such tableaux where entries are in $\{1,\ldots,N\}$.

Let $u$ be a box in $\lambda$ which is in the $k$th row from the top and $\ell$th column from the left. The {\em content} $c(u)$ of $u$ is defined as $\ell-k$; its {\em hook-length} $h(u)$ is the number of boxes in $\lambda$ which are below $u$ and in the same column, or right of $u$ and in the same row ($u$ itself being counted just once). Define then $H_\lambda=\prod h(u)$ where the product is over all cells $u$ of the diagram $\lambda$. 

We have then the {\em hook content formula}:  
 \begin{equation}
\label{eq:hookcontent}
\ssyt(\lambda,N)= \frac{1}{H_\lambda}\prod_{u\in\lambda}(N+c(u)),
\end{equation}
so that $\ssyt(\lambda,N)$ is given by a polynomial in $N$ with leading term $\frac{1}{H_\lambda}N^{|\lambda|}$. For the diagram in Figure~\ref{fig:Dn} we get $\frac{1}{24}(N+1)N^2(N-1)(N-2)$.

\subsection{Fully packed Loops in a triangle}
\label{sub:tfpl}

We define the triangle $\mathcal{T}^n\subseteq \mathbb{Z}^2$ as the set of points $(x,y)$ which verify $x\geq y \geq 0$ and $x+y\leq 4n-2$. We also include the following edges in the definition of $\mathcal{T}^n$: $2n$ vertical edges $(e_i)_i$ below the vertices $(2i-2,0)$ for $i=1\ldots 2n$; $2n-1$ horizontal edges between $(i,i)$ and $(i+1,i)$, as well as  between $(4n-2-i-1,i)$ and $(4n-2-i,i)$, for $i=0,\ldots,2n-2$. These edges are in bold on Figure~\ref{fig:tfpl}, left. A vertex of $\mathcal{T}^n$ is {\em even} or {\em odd} depending on the parity of its sum of coordinates: we represent even vertices by filled squares and odd ones by squares with a white interior. We also call \emph{inner vertices} the vertices of $\mathcal{T}^n$ which are not of the form $(i-1,i-1)$ (\emph{left vertices}) or $(2n-2+i,2n-i)$ (\emph{right vertices}).

We now impose extra conditions given by $\si,\tau$ words in $\Dn$. If $\si=\si_1\ldots\si_{2n}$, we add a vertical edge below $(i-1,i-1)$ for each $i$ such that $\si_i=0$, and forbid such an edge if $\si_i=1$. If $\tau=\tau_1\ldots\tau_{2n}$, we add a vertical edge below $(2n-2+i,2n-i)$ for each $i$ such that $\tau_i=1$, and forbid such an edge if $\tau_i=0$. Let $\mathcal{T}^n(\si,\tau)$ be the corresponding triangle.

\begin{figure}[!ht]
\begin{center}
\includegraphics[width=\textwidth]{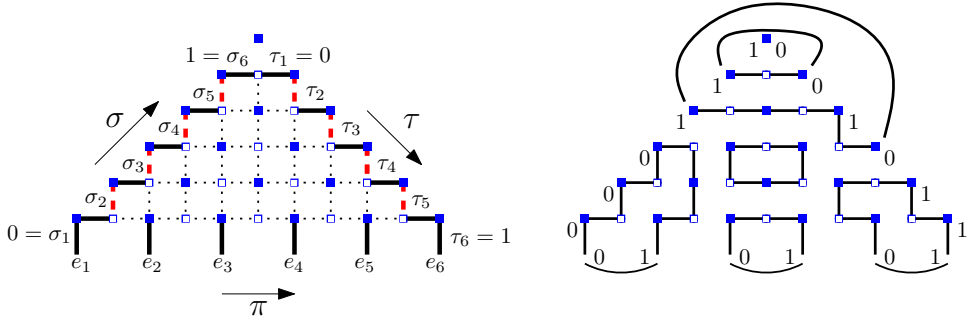}
\caption{Triangle $\mathcal{T}^n(\si,\tau)$ and an example of TFPL.
\label{fig:tfpl}}
\end{center}
\end{figure}

\begin{defi}[TFPLs]
\label{defi:tfpls}
A \emph{FPL configuration $f$ in a triangle (TFPL)} with boundary conditions $\si,\tau,\pi$ in $\Dn$ is a graph on $\mathcal{T}^n(\si,\tau)$, such that all inner vertices  are imposed to be of degree $2$, and furthermore (I) for each pair $\{i,j\}$ in $\pi$, the edges $e_i$ and $e_j$ are linked by a path in $\mathcal{T}^n$ , and (II) the paths starting from a left (\emph{resp.} right) vertex must end at a right (\emph{resp.} left) vertex.
\end{defi} 

An example is shown on Figure~\ref{fig:tfpl}, right. The set of these TFPLs is denoted by $\Tspt$, and we let $\tspt$ be its cardinality. We have clearly a \emph{left-right symmetry} in TFPLs: the reflection of a TFPL configuration about a vertical axis is a TFPL configuration. If $f$ belongs to $\Tspt$, then its reflection belongs to $\mathbf{TFPL}_{\tau^*,\si^*}^{\pi^*}$.
Note that because of condition (II) there are three kinds of paths in a TFPL: \emph{Left-Right paths} which join a left vertex to a right vertex, \emph{Bottom paths} which link two edges $e_i$ and $e_j$, and \emph{Closed paths}.

The main result of this paper is the following enumeration, in which $\cspt$ is a Littlewood--Richardson coefficient, cf. Section~\ref{sub:puzzles}:

\begin{thm}
\label{th:main}
Suppose $\si,\tau,\pi\in\Dn$ verify $\size(\si)+\size(\tau)=\size(\pi)$. Then we have
 \[
  \tspt=\cspt.
 \]

\end{thm}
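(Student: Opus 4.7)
The strategy is to exhibit an explicit bijection between $\Tspt$ and a set of Knutson--Tao puzzles counted by $\cspt$. I would proceed in two stages. The first stage assigns a canonical orientation to every TFPL in $\Tspt$, producing a set $\oTspt$ of oriented TFPLs, and argues that under the hypothesis $\size(\si)+\size(\tau)=\size(\pi)$ this assignment is well-defined and invertible, so that $\tspt=\otspt$. Intuitively the degree condition is a conservation law: it should force the absence of Closed paths and constrain Left--Right and Bottom paths so rigidly that each path has a unique preferred direction — say, from the $\si$-boundary to the $\tau$-boundary for Left--Right paths, and from a ``$0$''-labelled external edge to a ``$1$''-labelled one for Bottom paths.

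The second stage defines a local map $\oPhi:\oTspt\to \KT$. The triangle $\mathcal{T}^n$ decomposes into unit cells, and in each cell one reads off the configuration of incident oriented edges and translates it into a Knutson--Tao puzzle piece (a $0$-triangle, $1$-triangle, or rhombus, with the appropriate edge labels). One then checks that the induced labels match across cell boundaries, producing a valid puzzle, and that the construction is reversible cell by cell. The boundary conditions transfer correctly: through the Dyck-word/partition correspondence of Section~\ref{sub:words}, the words $\si,\tau,\pi$ are exactly the three boundary sides of the puzzle associated with the partitions $\lsi,\ltau,\lpi$.

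The main obstacle is stage one: establishing that the canonical orientation is simultaneously existent, unique, and combinatorially intrinsic to the unoriented TFPL. Ruling out Closed paths should follow from a counting argument comparing $|\si|_0$, $|\tau|_0$, $|\pi|_0$ and the degree identity, but showing that the orientation of the remaining paths is forced — as opposed to being one of several consistent choices — requires a genuinely global argument. A secondary difficulty is that the local pieces of $\oPhi$ must be verified on every possible neighbourhood of an inner vertex (subject to the degree-$2$ constraint) and along the distinguished bottom/left/right boundaries where the extra imposed or forbidden edges live; this case analysis is presumably packaged into the ``three major ingredients'' announced for Section~\ref{sec:ingredients}, which I would expect to include a classification of admissible local patterns, a proof that $\oPhi$ is edge-compatible, and the reverse reconstruction of the orientation from a puzzle.
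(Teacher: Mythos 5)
Your outline runs the map in the opposite direction from the paper, and in doing so it lands squarely on the step the paper deliberately avoids. You propose to start from a TFPL, canonically orient it, and then translate each unit cell into a puzzle piece. The gap is in that last translation: an oriented TFPL can a priori contain local configurations that correspond to \emph{no} puzzle piece (in the paper's coordinates: two consecutive left-oriented horizontal edges, or a vertical edge whose lower vertex is even). Proving that the degree condition $\size(\si)+\size(\tau)=\size(\pi)$ excludes these patterns is precisely Proposition~\ref{prop:csq_bij}, and in the paper that proposition is a \emph{byproduct} of Theorem~\ref{th:bij}, not an ingredient of its proof; a direct local/structural proof is explicitly deferred to the forthcoming work~\cite{TFPL3}. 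Your proposal offers no argument for this exclusion beyond the expectation that it is ``packaged into the three major ingredients'' --- it is not, and without it your map $\oTspt\to\KT$ is only partially defined. (By contrast, your worry about stage one is largely misplaced: the orientation of Bottom and Left--Right paths is forced by the boundary orientations of $\mathcal{T}_{\rightarrow}^n(\si,\tau)$ and of the edges $e_i$, so only Closed paths leave any freedom.)

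The paper sidesteps the hard direction entirely. It defines $\oPhi$ from puzzles \emph{to} oriented TFPLs, where every one of the ten puzzle pieces translates into a valid local configuration, so the map is total by construction; injectivity follows from Lemma~\ref{lem:noloops} (no closed loops, all Bottom paths oriented left to right, hence the orientation is recoverable from the unoriented graph). This yields only $\cspt\leq\tspt$. The equality is then forced not by surjectivity but by the enumerative identity~\eqref{eq:identity_tc}, obtained by extracting the top-degree coefficient of the polynomial formula~\eqref{eq:api} for $A_\pi(m)$ and comparing it with the specialization of $s_{\lambda}(\mathbf{x},\mathbf{y})=\sum_{\mu,\nu}\mathbf{c}^{\lambda}_{\mu,\nu}s_\mu(\mathbf{x})s_\nu(\mathbf{y})$: a vanishing sum of nonnegative terms $\left(\tspt-\cspt\right)/(H_{\lsi}H_{\ltau})$ forces each term to vanish. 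This identity is the third ``major ingredient'' and is entirely absent from your proposal; without it, or without a genuine structural proof of the forbidden-pattern exclusion, your argument does not close.
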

 
We will prove this in Section~\ref{sec:bij}; in Section~\ref{sec:ingredients} we give the key tools of this proof. Let us note that Theorem~\ref{th:main} was independently conjectured in~\cite[Lemma 2 and following remark]{ZJtriangle}, as a consequence of the main conjecture of that paper which is a formula for $\tspt$ \emph{for any $\si,\tau,\pi\in\Dn$}.
\medskip

\subsection{Link between FPLs and TFPLs} 
Given a link pattern $\pi$ on $\{1,\ldots,2n\}$, and an integer $m\geq 0$, define $\pi\cup m$ as the link pattern on $\{ 1,\ldots,2(n+m)\}$ given by the nested pairs $\{i,2n+2m+1-i\}$ for $i=1\ldots m$, and the pairs $\{i+m,j+m\}$ for each $\{i,j\}\in \pi$. As words, note that this is simply $\pi\cup m=0^m\pi 1^m $. We introduce the notation $\displaystyle{A_\pi(m):=A_{\pi\cup m}}$, so that in particular $A_\pi(0)=A_\pi$. The structure of TFPLs arises naturally when considering FPL configurations enumerated by $A_\pi(m)$ for $m$ big enough, see~\cite{CKLN}. It was shown in~\cite{CKLN,Thapper,TFPL1} that we have the following formula: for $m,k$ integers,
\begin{equation}
\label{eq:api}
A_\pi(m)=\sum_{\si,\tau\in \Dn}\ssyt(\lsi,n+k)\cdot \tspt\cdot  \ssyt(\ltaus,m-k-2n+1),
\end{equation}

Using this expression and certain properties of TFPLs, one can show in particular the following result, which was conjectured in~\cite{Zuber-conj}:

\begin{thm}[\cite{CKLN}]
\label{th:CKLN}
 $A_\pi(m)$ is a polynomial in $m$ with leading term $\frac{1}{H(\pi)}m^{\size(\pi)}$.
\end{thm}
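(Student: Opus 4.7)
The plan is to combine Formula~\eqref{eq:api}, the hook content formula~\eqref{eq:hookcontent}, and Theorem~\ref{th:main}. Fix any integer $k$; by~\eqref{eq:hookcontent}, the factor $\ssyt(\lsi,n+k)$ appearing in~\eqref{eq:api} is a nonnegative constant in $m$, while $\ssyt(\ltaus,m-k-2n+1)$ is a polynomial in $m$ of degree $|\ltaus|=\size(\tau)$ with leading coefficient $1/H_{\ltaus}$. Since conjugation of a Young diagram permutes its hook lengths, $H_{\ltaus}=H_{\ltau}$. Substituting into~\eqref{eq:api} immediately shows that $A_\pi(m)$ is a polynomial in $m$, and reduces the computation of its top-degree term to locating the pairs $(\si,\tau)$ that maximise $\size(\tau)$ subject to $\tspt\neq 0$.

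The key input at this point is the support bound for TFPLs,
\[
\tspt\neq 0\ \Longrightarrow\ \size(\si)+\size(\tau)\leq\size(\pi),
\]
a known property of TFPLs established in~\cite{TFPL1}. Granting it, $\size(\tau)\leq\size(\pi)$, with equality only if $\size(\si)=0$, that is $\lsi=\emptyset$ and $\si=0^n1^n$. For such $\si$ the relation $\size(\si)+\size(\tau)=\size(\pi)$ is precisely the hypothesis of Theorem~\ref{th:main}, which then yields
\[
\tspt=\cspt=\mathbf{c}_{\emptyset,\ltau}^{\lpi}=\delta_{\ltau,\lpi}.
\]
Hence among all pairs reaching the maximal $m$-degree, only the single pair $\si=0^n1^n$, $\tau=\pi$ actually contributes.

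Its contribution to $A_\pi(m)$ is $\ssyt(\emptyset,n+k)\cdot 1\cdot \ssyt(\lambda(\pi^*),m-k-2n+1)$, whose leading term in $m$ is
\[
\frac{1}{H_{\lambda(\pi^*)}}\,m^{\size(\pi)}=\frac{1}{H(\pi)}\,m^{\size(\pi)},
\]
using once more that conjugation preserves hook lengths. Because every summand in~\eqref{eq:api} is a polynomial in $m$ with nonnegative leading coefficient, no cancellation can occur at top degree, so the leading term of $A_\pi(m)$ is exactly the one above. The main (and only non-routine) obstacle in this strategy is the support bound $\size(\si)+\size(\tau)\leq\size(\pi)$, which must be imported from~\cite{TFPL1}; once granted, Theorem~\ref{th:main} combined with the Kronecker collapse $\mathbf{c}_{\emptyset,\ltau}^{\lpi}=\delta_{\ltau,\lpi}$ trivialises the extremal stratum, and the hook content formula delivers the claimed leading coefficient.
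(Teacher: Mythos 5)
The paper does not actually prove Theorem~\ref{th:CKLN}: it is imported from \cite{CKLN}, where it is established by a direct analysis of FPL configurations with nested arches. Your argument instead derives it from Theorem~\ref{th:main}, and this is where it fails: within this paper the logical dependence runs the other way. The proof of Theorem~\ref{th:main} is completed only by Identity~\eqref{eq:identity_tc}, which upgrades the injection $\cspt\leq\tspt$ to an equality; Identity~\eqref{eq:identity_tc} rests on Equation~\eqref{eq:identity_t} of Proposition~\ref{prop:thapperdegree}, and the proof of that proposition explicitly invokes Theorem~\ref{th:CKLN} to identify the degree and leading term of $A_\pi(m)$. Using Theorem~\ref{th:main} to prove Theorem~\ref{th:CKLN} is therefore circular. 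The same objection applies to your other key input, the support bound $\tspt\neq 0\Rightarrow \size(\si)+\size(\tau)\leq\size(\pi)$: as presented here (first part of Proposition~\ref{prop:thapperdegree}, following \cite[Lemma 3.7]{Thapper}) it too is deduced from the polynomiality and degree of $A_\pi(m)$, i.e.\ from the very statement you are trying to prove, so it cannot be ``imported'' without checking that its source gives an independent argument.

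Setting the circularity aside, your top-degree extraction from~\eqref{eq:api} is correctly executed: with $k$ fixed, the first factor is constant in $m$, the second has degree $\size(\tau)$ and leading coefficient $1/H_{\ltaus}=1/H_{\ltau}$, all leading coefficients are nonnegative so there is no cancellation, and the extremal stratum $\si=0^n1^n$, $\tau=\pi$ with $t_{\si,\tau}^{\pi}=\mathbf{c}_{\emptyset,\ltau}^{\lpi}=\delta_{\ltau,\lpi}$ would indeed yield the leading term $\frac{1}{H_{\lpi}}m^{\size(\pi)}$. To make this a genuine proof you would need independent, TFPL-level arguments for the two facts you borrow: that no TFPL with boundary $(\si,\tau,\pi)$ exists when $\size(\si)+\size(\tau)>\size(\pi)$, and that for $\si=0^n1^n$ and $\size(\tau)=\size(\pi)$ the only possibility is $\tau=\pi$ with a unique configuration. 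Neither can be taken from Theorem~\ref{th:main} or Proposition~\ref{prop:thapperdegree} as they stand in this paper.
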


From Equations~\eqref{eq:hookcontent} and\eqref{eq:api}, we see that an expression for the numbers $\tspt$ will lead to a formula for $A_\pi(m)$. What Theorem~\ref{th:main} shows is that the numbers $\tspt$ are also interesting on their own.

\section{Ingredients of the proof}
\label{sec:ingredients}

In Section~\ref{sub:oTFPL} we introduce certain variations on TFPL configurations called \emph{oriented} TFPL configurations; whereas the former use connectivity conditions, which are {\em global}, the latter only necessitate \emph{local} conditions which make them easier to manipulate. Then we introduce Knutson--Tao puzzles in Section~\ref{sub:puzzles}, which are combinatorial objects counted by Littlewood--Richardson numbers; the proof of Theorem~\ref{th:main} will consist of a bijection between these puzzles and TFPLs. The correctness of this bijection will make use in an essential way of the identity~\eqref{eq:identity_tc} proved in Section~\ref{sub:summation}.

\subsection{Oriented TFPL configurations}
\label{sub:oTFPL}
We define $\mathcal{T}_{\rightarrow}^n(\si,\tau)$ as the graph $\mathcal{T}^n(\si,\tau)$ with all edges on the left side oriented up and right, and all edges on the right side oriented down and right; see example on Figure~\ref{fig:otfpls}, left.

\begin{defi}[oriented TFPLs]
\label{defi:otfpls}
An \emph{oriented TFPL configuration $f$} with boundary conditions $\si,\tau,\pi$ in $\Dn$ is a directed graph on $\mathcal{T}_{\rightarrow}^n(\si,\tau)$ such that all inner vertices have exactly one incoming edge and one outgoing edge, and in which the edge $e_i$ is directed upwards (\emph{resp.} downwards) if $\pi_i=0$ (\emph{resp.} $\pi_i=1$).
\end{defi} 

\begin{figure}[!ht]
\begin{center}
\includegraphics[width=\textwidth]{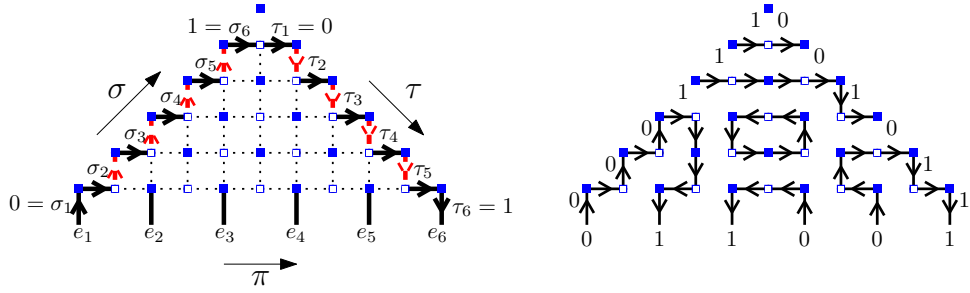}
\caption{Triangle $\mathcal{T}_{\rightarrow}^n(\si,\tau)$ and an example of oriented TFPL.
\label{fig:otfpls}}
\end{center}
\end{figure}

The set of oriented TFPLs is denoted by $\oTspt$ and its cardinality $\otspt$. Like TFPLs, oriented TFPLs also possess a \emph{left-right symmetry}: by reflecting the underlying non-oriented graph of an oriented TFPL  about a vertical axis, and inverting the orientation of up and down edges while keeping left and right orientations, one obtains another oriented TFPL configuration. This is easily seen to be an involution exchanging  $\oTspt$ and $\overrightarrow{\mathbf{TFPL}}_{\tau^*,\si^*}^{\pi^*}$.

\begin{prop}
\label{prop:oTFPLs_to_TFPLs}  Let us be given an oriented TFPL configuration $F\in\oTspt$.
\begin{enumerate}[(a)]
\item Its underlying (non-oriented) graph is a TFPL configuration $f$.\label{ita}
\item The directed edges induce a global orientation on each path of $f$, where all Left-Right paths are oriented from left to right. \label{itb}
\item The TFPL $f$ belongs to $\mathcal{T}_{\si,\tau}^{\pi'}$ for a certain link pattern $\pi'$, and $\pi'=\pi$ if and only all Bottom paths are oriented from left to right.\label{itc}
\end{enumerate}
\end{prop}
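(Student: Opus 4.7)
The plan is to extract from $F$ a canonical decomposition into directed paths and cycles, induced by the in/out-degree condition at each inner vertex, and to read all three claims from it.

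First I would observe that $F$ admits a canonical edge-disjoint decomposition into directed paths and directed cycles. Indeed, at each inner vertex $v$ the unique in-edge and out-edge are naturally paired, so starting at any directed edge and alternately following out-edges (forward) and in-edges (backward) traces a directed walk that can only terminate at a non-inner vertex---a left or right vertex, or the outer endpoint of a bottom edge $e_i$---or close up into a directed cycle through inner vertices.

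For part (a), the degree-$2$ condition at inner vertices of $f$ is immediate since $d^-(v)+d^+(v)=2$. Forgetting orientations in the decomposition above realises $f$ as a union of paths and cycles whose path-endpoints are boundary points, and the matching of the $e_i$ induced by the Bottom paths is non-crossing by planarity of $\mathcal{T}^n$, hence encodes some link pattern $\pi'\in\Dn$. Conditions (I) and (II) then reduce to verifying that every path of $f$ is of exactly one of the three types Left-Right, Bottom, Closed---so that no path connects an $e_i$ to a left or right vertex, and no path joins two vertices on the same side.

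For part (b), since every edge of $F$ carries an orientation, the canonical path decomposition orients each path of $f$ consistently. The delicate point, which simultaneously settles the remaining type statement in part (a), is that a Left-Right path must be oriented from left to right: at a left-vertex endpoint the horizontal right boundary edge is outgoing and, when $\si_i=0$, the vertical boundary edge below is incoming, and a local in/out count there forces the path to begin at this vertex; the symmetric analysis at right vertices forces the path to end there. Part (c) then follows from the orientation of the bottom edges: the source of any Bottom path is an $e_k$ with $\pi_k=0$ and its sink is an $e_\ell$ with $\pi_\ell=1$, so the Dyck word $\pi'$ of $f$ coincides with $\pi$ exactly when, for every matched pair $\{i,j\}$ with $i<j$ the source is $e_i$---that is, when every Bottom path runs from left to right.

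The main obstacle is the boundary in/out analysis underpinning part (b): the boundary edges at left and right vertices have fixed orientations, but interior edges incident to these vertices are freely oriented, so one must rule out the possibility that the canonical decomposition produces a Left-Right path directed backwards, or a path that starts at a left (resp.\ right) vertex and terminates at another left (resp.\ right) vertex or at the bottom. I expect this to follow from a careful degree count at each non-inner vertex, combined with the planarity of $\mathcal{T}^n$, and to feed directly into the proofs of (a) and (b).
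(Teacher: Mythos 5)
Your setup is fine and parts of the argument are correct: the degree count at inner vertices, the observation that every path inherits a consistent orientation, and the forward direction of (c) all match the paper. But the central step is missing. You correctly isolate the obstacle --- ruling out a path that starts at a left vertex and terminates at the bottom (or at another left vertex), which is exactly condition (II) of the definition of a TFPL --- and then you only say you ``expect this to follow from a careful degree count at each non-inner vertex, combined with planarity.'' It does not. Locally there is nothing inconsistent about a path leaving a left vertex and ending at a downward-oriented bottom edge $e_j$ with $\pi_j=1$, and globally the source/sink counts balance ($2n$ left sources, $2n$ right sinks, $n$ upward and $n$ downward bottom edges), so neither a local nor a global degree count rules this out. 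The statement genuinely depends on $\pi$ being a Dyck word, which your argument never uses.

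The paper's proof of this point goes as follows: if a path $p$ from a left vertex ended at $e_i$ with $\pi_i=1$, the Dyck prefix condition gives strictly more $0$s than $1$s in $\pi_1\cdots\pi_{i-1}$, hence more upward (source) than downward (sink) edges among $e_1,\ldots,e_{i-1}$; some path starting in this set must therefore leave it, but it cannot end at a left vertex (wrong orientation) nor anywhere else on the boundary without crossing $p$ --- a contradiction. Note also that the converse direction of your part (c) is asserted rather than proved: you need an argument (the paper inducts on removing $01$ factors of $\pi$) that a noncrossing matching of the $e_i$ in which every pair is oriented from a $0$-position to a later $1$-position must coincide with the matching encoded by $\pi$. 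As written, your proposal identifies the right difficulties but does not resolve the main one.
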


\dem Given an oriented configuration, it is clear that the underlying graph is such that each inner vertex has degree $2$, and the boundary conditions are the same as those of TFPLs. It is also clear that the directed edges induce a global orientation on paths, and that indeed Left-Right paths oriented from left to right by the definition of $\mathcal{T}_{\rightarrow}^n(\si,\tau)$, so~\eqref{itb} is clear.

To prove~\eqref{ita}, it remains to prove condition (II) in Definition~\ref{defi:tfpls}. We only need to show that paths starting from a left vertex necessarily end at a right vertex, the converse being then true by the  Left-Right symmetry of oriented TFPLs.

Consider a path $p$ starting at a left vertex. It cannot end at another such vertex because of the orientation of $\mathcal{T}_{\rightarrow}^n(\si,\tau)$. Suppose $p$ ends at an edge $e_i$, so that by Definition~\ref{defi:otfpls} one has $\pi_i=1$. Since $\pi$ is a Dyck path, its prefix $\pi_1\ldots\pi_{i-1}$ contains strictly more $0$s than $1$s, so that there are more up-edges than down-edges among the $i-1$ first bottom edges $\{e_1,\ldots,e_{i-1}\}$. So there is a path starting in $\{e_1,\ldots,e_{i-1}\}$ that does not end there. This is a contradiction, since this path cannot end at a left vertex (the orientations don't match), or anywhere else on the boundary since it would then have to cross the path $p$. Therefore all paths starting from a left vertex must end at a right vertex, and as argued above this proves~\eqref{ita}.

 So $f$ is a TFPL, and, since $\mathcal{T}_{\rightarrow}^n(\si,\tau)$ is simply a certain orientation of $\mathcal{T}^n(\si,\tau)$, we know that $f$ belongs in fact to $\mathcal{T}_{\si,\tau}^{\pi'}$ for a certain $\pi'\in\Dn$. Assume $f$ verifies the link pattern $\pi$, so that for any pair $\{i<j\}$  in $\pi$, $f$ contains a path between $e_i$ and $e_j$. Now, considering $\pi$ as a word one has $\pi_i=0$ and $\pi_j=1$ and therefore $e_i$ is an up-edge and $e_j$ is a down-edge in $F$, so that the path between them is oriented from left to right.

Conversely assume all bottom paths in $F$ are oriented from left to right, and consider a pair $(\pi_i,\pi_{i+1})=(0,1)$ which exists since $\pi\in\Dn$. This means that $F$ contains a path from $e_i$ going right, and a path arriving to $e_{i+1}$ from the left. Since paths are non-crossing, this must be the same path, and thus $f$ respects the pair $\{i,i+1\}$ of the link pattern $\pi$. Consider now the Dyck word $\pi'$, obtained by removing $\pi_i\pi_{i+1}$ from $\pi$, and the bottom edges $\{e_1,\ldots,e_{2n}\}\backslash\{e_i,e_{i+1}\}$; we can then use the same argument, and by immediate induction we find that Bottom paths in $f$ verify the pattern $\pi$, proving~\eqref{itc}.
\findem

\subsection{Littlewood--Richardson coefficients and  Knutson--Tao puzzles}
\label{sub:puzzles}
We refer to~\cite{StanleyEnum2} for background on symmetric functions. Let $\mathbf{x}=(x_1,x_2,\ldots)$ be commuting variables, and let $\Lambda(\mathbf{x})$ be the algebra of symmetric functions in $\mathbf{x}$. For $\lambda$ a Ferrers diagram, the Schur function $s_\lambda(\mathbf{x})$ can be defined by
\begin{equation}
\label{eq:def_schur}
 s_\lambda(\mathbf{x})=\sum_{T}\prod_{i\geq 1} x_i^{T_i}
\end{equation}
where $T$ runs through all semistandard tableaux of shape $\lambda$, and $T_i$ is the number of entries equal to $i$ in the tableau $T$.

 The Schur functions form a linear basis of $\Lambda(\mathbf{x})$, and the {\em Littlewood--Richardson (LR) coefficients } $\mathbf{c}^{\lambda}_{\mu,\nu}$ are the corresponding structure constants defined by
 \[
  s_\mu(\mathbf{x})s_\nu(\mathbf{x})=\sum_{\lambda}\mathbf{c}^{\lambda}_{\mu,\nu}s_{\lambda}(\mathbf{x}).
 \]
 The LR coefficient $\mathbf{c}^{\lambda}_{\mu,\nu}$ is $0$ unless $\mu\subseteq\lambda,\nu\subseteq\lambda$ and $|\mu|+|\nu|=|\lambda|$; also, they are known to be nonnegative integers by character theory~\cite[p.355]{StanleyEnum2}. Many combinatorial descriptions of them are also known, the most famous being the original Littlewood--Richardson rule~\cite{LR}. We will here use Knutson--Tao puzzles~\cite{KTpuzzles,KTW2}:
let $n$ be an integer, and $\si,\tau,\pi$ words in $\Dn$. Consider a triangle with edge size $2n$ on the regular triangular lattice, where unit edges on left, bottom and right side are labeled by $\si,\tau,\pi$ respectively when read from left to right.
\begin{defi}[Knutson--Tao puzzle]

 A {\em Knutson--Tao (KT) puzzle} with boundary $\si,\tau,\pi$ is a labeling of each internal edge of the triangle with $0$,$1$ or $2$, such that the labeling induced on each of the $(2n)^2$ unit triangles is composed either of three $0$s, or of three $1$s, or of $0,1,2$ in counterclockwise order. 
\end{defi}

\begin{figure}[!ht]
\begin{center}
\includegraphics[width=0.9\textwidth]{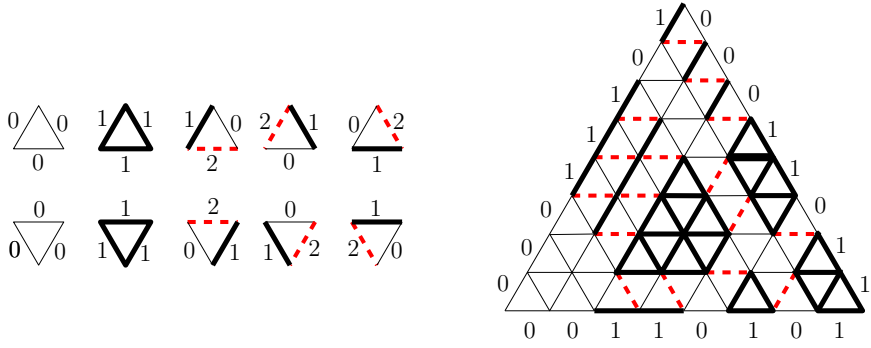}
\caption{Authorized unit triangles in a KT puzzle. \label{fig:puzzle}}
\end{center}
\end{figure}

We let $\KT$ be the set of Knutson-Tao puzzles with boundary $\si,\tau,\pi$. The list of all ten authorized labelings of unit  triangles is given on the left of Figure~\ref{fig:puzzle}, while on the right appears a puzzle with boundaries $\si=00011101$, $\tau=00011011$, $\pi=00110101$. KT puzzles give a combinatorial interpretation for LR coefficients:

\begin{thm}[\cite{KTpuzzles,KTW2}]
\label{th:KTLR}
Given $\si,\tau,\pi$ in $\Dn$, the number of KT puzzles with boundary $\si,\tau,\pi$ is given by $\cspt$. In particular, there are no such puzzles unless $\si\leq\pi,\tau\leq \pi,$ and $\size(\si)+\size(\tau)=\size(\pi)$.
\end{thm}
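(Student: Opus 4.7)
My plan is to construct an explicit bijection between KT puzzles with boundary $\si,\tau,\pi$ and Littlewood--Richardson tableaux of skew shape $\lpi/\lsi$ with content $\ltau$; combined with the classical Littlewood--Richardson rule, this immediately yields $|\KT| = \cspt$.

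I would begin by extracting the linear and numerical constraints. Each of the three legal triangle types in Figure~\ref{fig:puzzle} contributes a fixed quota of labels to its three edges, so counting label appearances on each horizontal slice of internal edges shows that the pattern of $1$s descends monotonically as one crosses from the top vertex down to the bottom; in Ferrers diagram language this translates to $\si\leq\pi$ and $\tau\leq\pi$. A weighted tally of the number of $(0,1,2)$-triangles (the ``rhombi'' in the usual puzzle jargon) then recovers $\size(\si)+\size(\tau)=\size(\pi)$, disposing of the final sentence of the theorem.

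The bijection itself tracks the edges labeled $1$. These edges glue together into a family of non-crossing lattice strands entering the triangle through the left side at the positions specified by $\si$ and leaving through the right side at the positions specified by $\pi$; dually, strands associated to $\tau$ enter through the bottom and will serve as content markers. Each $(0,1,2)$-triangle records a single interaction of two strands and contributes one box to a filling of $\lpi/\lsi$ with an entry determined by which bottom-strand is involved. Scanning horizontal slices of the triangle from top to bottom produces the filling, and semistandardness (weakly increasing rows, strictly increasing columns) follows from direct inspection of the four possible configurations of two unit triangles meeting along a common edge.

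The main obstacle is verifying the Yamanouchi (reverse lattice word) condition on the reading word of the resulting filling, which is what distinguishes LR tableaux from arbitrary semistandard ones. For this step I would argue by induction on the number of rows of $\lpi/\lsi$: peeling off the bottom horizontal slice of the puzzle corresponds to removing the largest entries of the tableau and replaces $\tau$ by a Dyck word $\tau'$ strictly below $\tau$ in $\Dn$; at every intermediate stage the sequence of strand positions along the current bottom row must itself form a Dyck word, and this is precisely the Yamanouchi condition on the reading word of the current filling. Reversibility is then straightforward: given an LR tableau, one reconstructs the strand skeleton row by row, and the local rules of Figure~\ref{fig:puzzle} force a unique puzzle labeling compatible with any given strand diagram.
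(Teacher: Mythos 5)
First, note that the paper does not prove Theorem~\ref{th:KTLR} at all: it is quoted as a known result from \cite{KTpuzzles,KTW2}, and its second sentence is just the first sentence combined with the standard vanishing conditions for $\mathbf{c}^{\lambda}_{\mu,\nu}$ already recalled in Section~\ref{sub:puzzles}. So there is no internal proof to compare against; what you are attempting is a self-contained proof of the puzzle rule itself, which is a substantial theorem in its own right.

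Your chosen route --- a direct bijection between puzzles and Littlewood--Richardson skew tableaux --- is viable in principle (it is essentially the strategy behind Tao's direct bijection and Purbhoo's later work), and your counting argument for the necessary conditions $\si\leq\pi$, $\tau\leq\pi$ and $\size(\si)+\size(\tau)=\size(\pi)$ is the standard one and should go through. But the core of the argument has a concrete gap. Your induction ``peels off the bottom horizontal slice'' of the puzzle and claims this replaces $\tau$ by a Dyck word $\tau'$ strictly below $\tau$ in $\Dn$. Removing the bottom row of unit triangles from a triangle of side $2n$ leaves a triangle of side $2n-1$ whose new bottom boundary is a word of \emph{odd} length over the alphabet $\{0,1,2\}$ (internal edges may carry the label $2$), so it is not an object of the same kind and the induction hypothesis cannot be applied to it. More fundamentally, the identification ``horizontal slices of the puzzle equal rows of the tableau,'' and the claim that the Yamanouchi condition is ``precisely'' the statement that intermediate strand-position words are Dyck words, are asserted rather than proved; in all known bijective proofs this is exactly the delicate point and requires a carefully designed reading order or a jeu-de-taquin/deflation argument. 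The semistandardness claim (``follows from direct inspection of the four possible configurations'') and the inverse construction (``the local rules force a unique puzzle labeling'') are likewise unsubstantiated. As written, the proposal is a plausible outline of a known proof strategy, but not a proof; for the purposes of this paper it is also unnecessary, since the theorem is imported from the literature.
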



\subsection{A useful formula}
\label{sub:summation}
The goal of this section is to prove the following formula which holds  for any $\pi\in\Dn$:
\begin{equation}
\label{eq:identity_tc}
\mathop{\sum_{\si,\tau\in\Dn}}_{\size(\si)+\size(\tau)=\size(\pi)} \left(\tspt-\cspt\right)\cdot \frac{1}{H_{\lsi}H_{\ltau}}=0.
\end{equation}

This will be a consequence of the two formulas~\eqref{eq:identity_t} and~\eqref{eq:identity_c}.

\begin{prop}
\label{prop:thapperdegree}
One has $\tspt=0$ unless $\size(\si)+\size(\tau)\leq \size(\pi)$. Furthermore, for every $\pi\in\Dn$ we have
\begin{equation}
\label{eq:identity_t}
\frac{1}{H_\lpi}=\mathop{\sum_{\si,\tau\in\Dn}}_{\size(\si)+\size(\tau)=\size(\pi)} \tspt\cdot \frac{1}{2^{\size(\si)}H_{\lsi}}\cdot \frac{1}{2^{\size(\tau)}H_{\ltau}}.
\end{equation}
\end{prop}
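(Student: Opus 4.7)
The plan is to read equation~\eqref{eq:api} as a polynomial identity in two independent variables and then to compare top-degree homogeneous parts on both sides.

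Introduce $N_1 := n+k$ and $N_2 := m-k-2n+1$, so that $m = N_1 + N_2 + n - 1$. By Theorem~\ref{th:CKLN} the left-hand side of~\eqref{eq:api} is polynomial in $m$ with leading term $\frac{1}{H_\lpi} m^{\size(\pi)}$; viewed as a polynomial in $(N_1, N_2)$, it thus has total degree $\size(\pi)$ and top-degree homogeneous part $\frac{1}{H_\lpi}(N_1+N_2)^{\size(\pi)}$. By the hook content formula~\eqref{eq:hookcontent}, $\ssyt(\lsi,N_1)$ is polynomial in $N_1$ of degree $\size(\si)$ with leading coefficient $1/H_\lsi$, and similarly $\ssyt(\ltaus,N_2)$ has leading coefficient $1/H_{\ltaus}=1/H_\ltau$, using that the product of hook lengths is invariant under conjugation of diagrams. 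Hence the summand of the right-hand side indexed by $(\si,\tau)$, viewed as a polynomial in $(N_1, N_2)$, has top-degree homogeneous part equal to the single monomial $\frac{\tspt}{H_\lsi H_\ltau}\, N_1^{\size(\si)} N_2^{\size(\tau)}$. Since~\eqref{eq:api} holds for infinitely many admissible integer pairs $(m,k)$ and both sides are polynomial in $(N_1,N_2)$, the identity holds at the level of polynomials.

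For the first claim, set $M := \max\{\size(\si) + \size(\tau) : \tspt \neq 0\}$ and consider the homogeneous component of total degree $M$ of the right-hand side, which equals
\[
\sum_{\size(\si)+\size(\tau)=M} \frac{\tspt}{H_\lsi H_\ltau}\, N_1^{\size(\si)} N_2^{\size(\tau)}.
\]
Each coefficient is nonnegative and, by definition of $M$, at least one is strictly positive, so this polynomial is nonzero. Yet the left-hand side has no monomial of total degree exceeding $\size(\pi)$, forcing $M \leq \size(\pi)$.

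For identity~\eqref{eq:identity_t}, extract from both sides the homogeneous component of total degree exactly $\size(\pi)$. On the right only pairs with $\size(\si)+\size(\tau)=\size(\pi)$ contribute (all others produce strictly lower total degree, by the first claim just proved), so the equality reduces to
\[
\frac{(N_1+N_2)^{\size(\pi)}}{H_\lpi} = \sum_{\size(\si)+\size(\tau)=\size(\pi)} \frac{\tspt}{H_\lsi H_\ltau}\, N_1^{\size(\si)} N_2^{\size(\tau)}.
\]
Specializing at $N_1 = N_2 = 1$ gives $\frac{2^{\size(\pi)}}{H_\lpi} = \sum \frac{\tspt}{H_\lsi H_\ltau}$ (the sum running over pairs with $\size(\si)+\size(\tau)=\size(\pi)$); dividing by $2^{\size(\pi)} = 2^{\size(\si)} \cdot 2^{\size(\tau)}$ then yields~\eqref{eq:identity_t}. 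The main care point is to verify that~\eqref{eq:api} indeed holds on a Zariski-dense set of integer pairs $(m,k)$ so as to identify the two sides as polynomials in $(N_1, N_2)$, which I would confirm via~\cite{CKLN,TFPL1}.
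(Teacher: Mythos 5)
Your argument is correct and follows essentially the same route as the paper: the paper specializes $k=m/2-n$ in~\eqref{eq:api} and compares leading coefficients of the resulting one-variable polynomial identity in $m$, using Theorem~\ref{th:CKLN}, the hook content formula, and the positivity of the $\tspt$ exactly as you do. Your two-variable version (with the specialization $N_1=N_2=1$ at the end) is a mild repackaging of the same degree count, and the one input you flag as needing verification --- that \eqref{eq:api} holds for enough integer pairs $(m,k)$ to be identified as a polynomial identity in both variables --- is precisely what the paper asserts when stating \eqref{eq:api} ``for $m,k$ integers.''
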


\begin{proof}
We use an argument of ~\cite[Lemma 3.7]{Thapper}, in which the first part of the proposition is proved.  Assume $m$ is an even integer; Equation~\eqref{eq:api} with $k=m/2-n$ becomes 
\begin{equation}
\label{eq:1}
 A_\pi(m)=\sum_{\si,\tau\in \Dn}\ssyt(\lsi,m/2)\cdot \tspt \cdot \ssyt(\ltaus,m/2-n+1).
\end{equation}
This is a polynomial identity in $m$. By Theorem~\ref{th:CKLN}, $A_\pi(m)$ is a polynomial with leading term $\frac{1}{H_\lpi}m^{\size(\pi)}$. Therefore the coefficients of degree $>\size(\pi)$ must vanish on the r.h.s., which implies the first part of the proposition. Since a polynomial $\ssyt(\lambda,m)$ has leading term $\frac{1}{H_\lambda}m^{\size(\lambda)}$, the second part follows by taking the coefficient of degree ${\size(\pi)}$ on both sides of~\eqref{eq:1}.
\end{proof}

 From Equation~\eqref{eq:def_schur}, we have that $s_\lambda(\mathbf{x})$ specializes to  $\ssyt(\lambda,N)$ under the substitutions $x_i=1$ for $i=1\ldots N$, and $x_i=0$ otherwise. Now if one considers the Schur function $s_\lambda(\mathbf{x},\mathbf{y})$ in the variables $\{x_1,x_2,\ldots,y_1,y_2,\ldots\}$, then we have  $s_{\lambda}(\mathbf{x},\mathbf{y})=\sum_{\mu,\nu} \mathbf{c}^{\lambda}_{\mu,\nu}s_\mu (\mathbf{x})s_\nu (\mathbf{y})$ (see~\cite[p.341]{StanleyEnum2}). By specializing at $x_i=y_i=1$ for $i=1\ldots m$ and $x_i=y_i=0$ for $i>m$, we thus get the following polynomial identity in $m$:
 \[
\ssyt(\lambda,m)= \sum_{\mu,\nu} \mathbf{c}^{\lambda}_{\mu,\nu} \ssyt(\mu,m/2) \ssyt(\nu,m/2),
\] 
which in top degree becomes:
\begin{equation}
\label{eq:identity_c}
\frac{1}{H_{\lambda}}=\sum_{\mu,\nu} \mathbf{c}_{\mu,\nu}^{\lambda}\cdot \frac{1}{2^{|\mu|}H_{\mu}}\cdot \frac{1}{2^{|\nu|}H_{\nu}}.
\end{equation}

Now we use~\eqref{eq:identity_c} with $\lambda=\lambda(\pi)$ for $\pi\in\Dn$. Since $\mathbf{c}_{\mu,\nu}^{\lpi}=0$ unless $\mu,\nu\subseteq \lambda(\pi)$ and $|\mu|+|\nu|=|\lambda|$, we can replace $\mu,\nu$ in~\eqref{eq:identity_c} with $\mu=\lsi,\nu=\ltau$ and sum over words $\si,\tau\in\Dn$. We then subtract it from~\eqref{eq:identity_t}: after removing the common factor $2^{\size(\si)}2^{\size(\tau)}=2^{\size(\pi)}$, we obtain~\eqref{eq:identity_tc}. \findem

\section{The bijection and its consequences}
\label{sec:bij}

Given three words $\si,\tau,\pi\in \Dn$ verifying $\size(\si)+\size(\tau)=\size(\pi)$, we will define here a bijection between KT puzzles and TFPL configurations with the same boundary conditions $\si,\tau,\pi$. By Theorem~\ref{th:KTLR}, this will then give a bijective proof of Theorem~\ref{th:main}.

\subsection{Definition}
\label{sub:defi_phi}
Given three words $\si,\tau,\pi\in \Dn$ verifying $\size(\si)+\size(\tau)=\size(\pi)$, let $P$ be a KT puzzle with boundary $\si,\tau,\pi$. By definition each labeled unit triangle in $P$ must be labeled as one of the ten possibilities in Figure~\ref{fig:puzzle}. Now to each of these triangles we apply the transformations described in Figure~\ref{fig:rulesoriented}, and delete the original puzzle. Some cosmetic modifications are necessary: rescale the resulting configuration vertically by a factor $1/\sqrt{3}$; remove the $2n$ odd vertices coming from the left boundary of the puzzle, together with the possible horizontal edges to their right; finally, double the length of the bottom vertical edges. Notice that edges $/$ and $\backslash$ of the original unit triangles are now sent to odd and even vertices of $\mathcal{T}_n$ respectively; see Figure~\ref{fig:fpltopuzzle}, right, for the end result.

\begin{figure}[!ht]
\begin{center}
\includegraphics[width=0.6\textwidth]{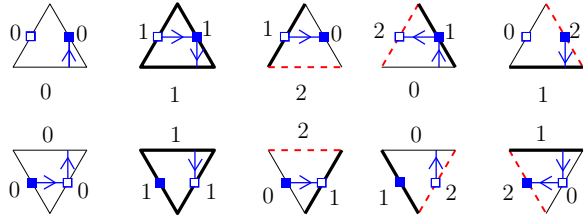}
\caption{ The local transformations in the main bijection.
  \label{fig:rulesoriented}}
\end{center}
\end{figure}

\begin{figure}[!ht]
 \begin{center}
 \includegraphics[width=\textwidth]{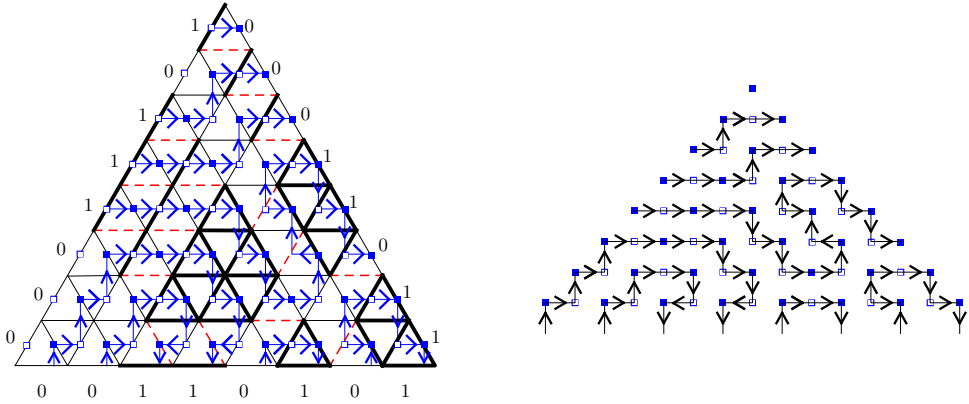}
 \caption{Example of the main bijection. \label{fig:fpltopuzzle}}
 \end{center}
 \end{figure}


  Looking at Figure~\ref{fig:rulesoriented}, one sees that, when two triangles of a puzzle share a common horizontal edge, the oriented vertical half edges drawn inside these triangles are compatible.  Indeed one notices that horizontal edges labeled $0,1$, and $2$ are respectively associated to oriented edges pointing up, pointing down, and to no such edge. Therefore the transformations produce a certain directed graph on $\mathcal{T}_n$. 

\begin{defi}[$\oPhi$ and $\Phi$]
\label{defi:phi}
  We define $\oPhi(P)$ to be the directed graph on $\mathcal{T}_n$ obtained by the construction above. We also define $\Phi(P)$ to be the underlying non-oriented graph.
\end{defi}

  We then have the following theorem:

\begin{thm}
\label{th:bij}
 Let $\si,\tau,\pi\in \Dn$ verify $\size(\si)+\size(\tau)=\size(\pi)$. Then $P\mapsto \Phi(P)$ is a bijection between $\KT$ and $\Tspt$.
\end{thm}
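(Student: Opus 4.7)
The plan is to prove the theorem in three stages: show that $\Phi$ is well-defined as a map into $\Tspt$, show that it is injective, and finally invoke identity~\eqref{eq:identity_tc} to upgrade injectivity into a bijection.

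For well-definedness I would first verify that $\oPhi(P)$ is an oriented TFPL. Each of the ten local rules in Figure~\ref{fig:rulesoriented} places at every inner vertex of the unit triangle a compatible pair of half-edges (one incoming and one outgoing), and the correspondence between horizontal puzzle-labels and vertical oriented edges ($0 \leftrightarrow$ up, $1 \leftrightarrow$ down, $2 \leftrightarrow$ no edge) guarantees that half-edges paste coherently across every shared puzzle-edge. The boundary words $\si,\tau$ of $P$ match the prescribed left and right sides of $\mathcal{T}_{\rightarrow}^n(\si,\tau)$ by inspection of the rules, and the remaining boundary word $\pi$ of $P$ translates into the pattern of up/down orientations on the bottom edges $e_i$ required by Definition~\ref{defi:otfpls}. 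Hence $\oPhi(P)\in \overrightarrow{\mathbf{TFPL}}_{\si,\tau}^{\pi'}$ for some $\pi'\in\Dn$, and Proposition~\ref{prop:oTFPLs_to_TFPLs}(a)--(b) yields $\Phi(P)\in \mathbf{TFPL}_{\si,\tau}^{\pi'}$. To identify $\pi'=\pi$, Proposition~\ref{prop:oTFPLs_to_TFPLs}(c) reduces the matter to checking that every Bottom path of $\Phi(P)$ is oriented from left to right; I would argue this by inspecting the rules to see that every directed edge produced by $\oPhi$ carries a non-leftward horizontal component, so that any Bottom path must travel strictly rightwards and the bottom-edge orientations demanded by $\pi$ propagate consistently.

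Injectivity then follows from a direct check that the ten local patterns drawn by $\oPhi$ inside a unit triangle are pairwise distinguishable as oriented configurations on the triangle's three sides: given $\oPhi(P)$, every internal puzzle-edge label can be recovered uniquely from the adjacent oriented half-edges, while the boundary labels are already determined by $\si,\tau,\pi$. Combined with Theorem~\ref{th:KTLR}, injectivity gives $\cspt \leq \tspt$ whenever $\size(\si)+\size(\tau)=\size(\pi)$. Substituting this inequality into identity~\eqref{eq:identity_tc}, whose weights $1/(H_{\lsi}H_{\ltau})$ are strictly positive, forces every summand to vanish; hence $\tspt=\cspt$ for all such triples, and the resulting equality of cardinalities promotes the injection $\Phi$ into a bijection.

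The step I expect to be the main obstacle is the identification $\pi'=\pi$: it is the only place where a purely local inspection of the rules does not suffice, and one must combine the orientation convention on $\mathcal{T}_{\rightarrow}^n(\si,\tau)$ with the global fact that $\oPhi(P)$ is a consistently right-flowing directed graph. Injectivity is then a finite rule-by-rule check, and the counting argument via~\eqref{eq:identity_tc} is mechanical once the inequality $\cspt \leq \tspt$ is secured.
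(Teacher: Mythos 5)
Your overall architecture matches the paper's: pass through the oriented configuration $\oPhi(P)$, check the local rules, prove injectivity, and close the gap between $\cspt\leq\tspt$ and equality via Identity~\eqref{eq:identity_tc}. The final counting step and the local verification that $\oPhi(P)\in\oTspt$ are fine. But there is a genuine gap at the step you yourself flag as the main obstacle, namely the identification $\pi'=\pi$. Your argument rests on the claim that every directed edge produced by the rules of Figure~\ref{fig:rulesoriented} has a non-leftward horizontal component. This is false: the local rules do produce horizontal edges oriented to the left --- only \emph{two consecutive} left edges are excluded, as the statement of Proposition~\ref{prop:csq_bij} makes clear (it would be vacuous otherwise). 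Consequently a Bottom path of $\oPhi(P)$ is not rightward-travelling edge by edge, and no purely local inspection can show it is globally oriented from left to right. The paper's Lemma~\ref{lem:noloops} proves this by an extremal argument: if a path were oriented from right to left, at its maximal $y$-coordinate there would be a horizontal edge oriented to the left, and the rules force such an edge to be adjacent either to two up-edges or to two down-edges, contradicting maximality. Some global argument of this kind is unavoidable here.

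The same omission propagates into your injectivity step. What you check --- that the ten oriented local patterns are pairwise distinct --- proves only that $\oPhi$ is injective, i.e.\ that $P$ is recoverable from the \emph{oriented} graph $\oPhi(P)$. The map $\Phi$ forgets the orientation, so you must additionally recover $\oPhi(P)$ from $\Phi(P)$. For that you need precisely the two assertions of Lemma~\ref{lem:noloops}: that $\oPhi(P)$ contains no oriented closed loops (otherwise the orientation of a loop is lost when erasing arrows, and two distinct puzzles could share the same unoriented image), and that all its Bottom and Left-Right paths are oriented from left to right (so that reorienting every path of $\Phi(P)$ canonically returns $\oPhi(P)$). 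Neither point is addressed in your proposal; in particular closed paths, which are legitimate constituents of a TFPL, are never mentioned. Once Lemma~\ref{lem:noloops} is supplied, the rest of your argument goes through exactly as in the paper.
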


The proof is given in the next section. Notice that the definition of $\Phi$ does not need the introduction of $\oPhi$, as one can perform the transformation rules of Figure~\ref{fig:rulesoriented} while simply ignoring the arrows. But as we will see, the orientation is essential in the demonstration of the theorem.

\subsection{Proof of Theorem~\ref{th:bij}}
\label{sub:proof_bij}

We have the following properties:

\begin{prop}
\label{prop:proof} Let $\si,\tau,\pi\in\Dn$ be such that $\size(\si)+\size(\tau)=\size(\pi)$, and let $P$ be a puzzle in $\KT$. Then
\begin{enumerate}[(a)]
 \item $\oPhi(P)$ belongs to $\oTspt$;\label{it1}
 \item $\Phi(P)$ belongs to $\Tspt$;\label{it2}
 \item $\Phi$ is injective;\label{it3}
\end{enumerate}
\end{prop}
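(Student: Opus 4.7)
The plan is to prove \eqref{it1}, \eqref{it2}, \eqref{it3} in sequence, using Proposition~\ref{prop:oTFPLs_to_TFPLs} as a bridge from the oriented graph $\oPhi(P)$ to its underlying graph $\Phi(P)$.

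For \eqref{it1}, I would verify directly that $\oPhi(P)$ satisfies Definition~\ref{defi:otfpls} by a purely local analysis. The compatibility of the half-edges across shared \emph{horizontal} edges of adjacent puzzle triangles is already noted in the text; one needs the analogous compatibility across shared \emph{slanted} edges, which is again a finite case check over the ten authorized labellings. The boundary conditions of $\mathcal{T}_{\rightarrow}^n(\si,\tau)$ should follow by inspecting the puzzle triangles adjacent to the left, right, and bottom sides of $P$: the $0$s and $1$s along the left side produce precisely the vertical edges below left vertices prescribed by $\si$, and similarly for $\tau$ on the right; a $0$ (resp.\ $1$) on the bottom produces an $e_i$ oriented upwards (resp.\ downwards), as required. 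Finally, the condition that each inner vertex of $\mathcal{T}^n$ has in-degree and out-degree exactly $1$ reduces, after the cosmetic modifications, to checking the ten rules of Figure~\ref{fig:rulesoriented} one by one.

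For \eqref{it2}, from \eqref{it1} we have $\oPhi(P)\in\oTspt$, so Proposition~\ref{prop:oTFPLs_to_TFPLs}\eqref{ita}--\eqref{itb} gives that $\Phi(P)$ is a TFPL in $\mathcal{T}_{\si,\tau}^{\pi'}$ for some $\pi'\in\Dn$, with all Left-Right paths already oriented from left to right. To conclude $\pi'=\pi$, Proposition~\ref{prop:oTFPLs_to_TFPLs}\eqref{itc} reduces the task to showing that every Bottom path of $\oPhi(P)$ is oriented from left to right. I would try an induction on $n$: pick a factor $(\pi_i,\pi_{i+1})=(0,1)$ in the Dyck word $\pi$, use the local puzzle rules around the two bottom triangles to show that $e_i$ and $e_{i+1}$ are joined by a short Bottom path directed left-to-right, then remove this pair and apply the induction hypothesis to a puzzle of strictly smaller size.

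For \eqref{it3}, I would reconstruct $P$ from $\Phi(P)$ in two stages. First, recover $\oPhi(P)$: bottom-edge orientations are prescribed by $\pi$; Proposition~\ref{prop:oTFPLs_to_TFPLs}\eqref{itb} orients the Left-Right paths, and \eqref{it2} orients the Bottom paths; any closed path has its orientation determined consistently by a single vertex orientation, which can be read off from the local rule governing a neighboring oriented fragment. Second, from $\oPhi(P)$ one reads back $P$ by local inversion of Figure~\ref{fig:rulesoriented}: each unit puzzle triangle occupies a fixed region of $\mathcal{T}_{\rightarrow}^n(\si,\tau)$, and by inspection the ten local rules produce ten distinct oriented pictures, so the labels on each unit triangle are uniquely determined.

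The main obstacle is \eqref{it2}: the statement that every Bottom path is oriented left-to-right is global rather than local. Crucially, the planar matching of up-oriented bottom edges to down-oriented ones is \emph{not} determined by $\pi$ alone (for example $010011\in\mathcal{D}_3$ admits two distinct non-crossing matchings of its three $0$-positions to its three $1$-positions). Ruling out the ``wrong'' non-crossing matchings therefore genuinely uses how $\oPhi(P)$ arises from a puzzle, and not merely planarity combined with orientation compatibility at the boundary; making the inductive peeling step above rigorous ---~i.e.\ checking that after excising an adjacent pair $(0,1)$ and its connecting path one is left with an object still of the form $\oPhi(P')$~--- is where I expect the real work to lie.
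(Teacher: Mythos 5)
Your part \eqref{it1} matches the paper's argument, and your reduction of \eqref{it2} to ``every Bottom path of $\oPhi(P)$ is oriented left to right'' via Proposition~\ref{prop:oTFPLs_to_TFPLs}\eqref{itc} is exactly the right move. But the way you propose to close that reduction is a genuine gap, and you have in effect flagged it yourself. The inductive peeling does not go through: excising a factor $(\pi_i,\pi_{i+1})=(0,1)$ together with its connecting path does not leave ``an object still of the form $\oPhi(P')$'' for any smaller puzzle $P'$ --- there is no operation on KT puzzles that deletes a $01$ on the bottom boundary and commutes with $\oPhi$, and the path joining $e_i$ to $e_{i+1}$ (whose very existence is part of what must be proved) can wind through the whole triangle, so nothing ``local around the two bottom triangles'' controls it. The missing idea is an extremal argument on heights (the paper's Lemma~\ref{lem:noloops}): one observes from Figure~\ref{fig:rulesoriented} that every \emph{left-oriented} horizontal edge of $\oPhi(P)$ is attached either to two edges going down or to two edges going up. A right-to-left Bottom path would have to contain a left-oriented horizontal edge at its maximal $y$-coordinate, and either local configuration forces the path above that height --- contradiction. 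The same argument (applied at the maximal height for counterclockwise loops and the minimal height for clockwise ones) shows $\oPhi(P)$ has \emph{no} closed oriented paths at all.

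That last point also undercuts your treatment of \eqref{it3}. You propose to recover the orientation of a closed path of $\Phi(P)$ ``from the local rule governing a neighboring oriented fragment,'' but nothing guarantees that a closed loop sits next to a fragment that pins down its orientation, and if $\oPhi(P)$ could contain loops of either orientation then $\Phi$ would not be injective by this route. The paper avoids the issue entirely: since $\oPhi(P)$ has no closed paths, $\oPhi(P)$ is recovered from $\Phi(P)$ simply by orienting all Left-Right and Bottom paths left to right, and then $P$ is recovered by the local inversion of Figure~\ref{fig:rulesoriented} exactly as you describe. So the single missing ingredient --- the no-reversed-paths/no-loops lemma proved by the height argument --- is what both \eqref{it2} and \eqref{it3} hinge on.
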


\begin{proof}
 
To prove~\eqref{it1} we check the conditions of Definition~\ref{defi:otfpls}. Let us first show that at each inner vertex of $\mathcal{T}_n$, the graph $\oPhi(P)$ has one incoming edge and one outgoing edge. For this one simply needs to check, on the local rules of Figure~\ref{fig:rulesoriented}, how the transformation acts on diagonal edges of the triangles. Looking at the three possible labels $0,1$, and $2$ and the two edges $/$,$\backslash$ of triangles, one checks that in each of these six cases the corresponding inner vertex has indeed one incoming edge and one outgoing edge. Now we need to verify that $\si,\tau,\pi$ are the boundary labels of the oriented graph configuration $\oPhi(P)$. In this case again, a direct inspection of the rules shows that this is indeed the case: it is immediate for $\pi$ and $\tau$, and for $\si$ one must remember that the vertices coming from the left side of $P$ must be deleted by the definition of $\oPhi$. In conclusion $\oPhi(P)$ is an oriented TFPL and~\eqref{it1} is proved. 

To prove~\eqref{it2}, it remains to verify verify that the bottom paths in $\Phi(P)$ follow the link pattern $\pi$: this is indeed the case thanks to  Proposition~\ref{prop:oTFPLs_to_TFPLs}\eqref{it3} and the second part of the following lemma:

\begin{lemma}
 \label{lem:noloops}
  $\oPhi(P)$ contains no (oriented) closed paths, and all Bottom paths in $\oPhi(P)$ are oriented from left to right.
\end{lemma}

\demof{Lemma~\ref{lem:noloops}} Suppose there is a path $p$ oriented from right to left in $\oPhi(P)$, and consider the largest $y$-coordinate $y_{max}$ visited by $p$. There is necessarily a horizontal edge $e$ on the line $y=y_{max}$, and $e$ must be oriented to the left by the (discrete) Jordan curve theorem. Now by observing the rules in Figure~\ref{fig:rulesoriented}, the edge $e$ is necessarily connected either to two edges going down or to two edges going up, and both cases contradict the definition of $y_{max}$, and therefore all Bottom paths are oriented from left to right.

Now suppose $\oPhi(P)$ has a closed loop $\ell$. If $\ell$ is oriented counterclockwise, then the previous analysis  leads to a contradiction. If $\ell$ is oriented clockwise, we consider the minimal $y$-coordinate visited by $\ell$; there must be a left oriented edge at this height, which leads to a contradiction here also.
\findem

 We now deal with~\eqref{it3}: notice first that $\oPhi$ is clearly injective. Indeed, in each local transformation of Figure~\ref{fig:rulesoriented}, the ten local configurations of oriented edges are distinct,  and thus determine uniquely the labeling of the unit triangle. To prove the injectivity of $\Phi$ comes down to proving that ``erasing the arrows'' in a configuration $\oPhi(P)$ is an injective operation. By Lemma~\ref{lem:noloops}, this is indeed the case: orienting all Bottom paths and Left-Right paths paths in $\Phi(P)$ from left to right gives back $\oPhi(P)$. This achieves the proof of the proposition.
\end{proof}

 \demof{Theorems~\ref{th:main} and~\ref{th:bij}} Let $\si,\tau,\pi\in \Dn$ be such that $\size(\si)+\size(\tau)=\size(\pi)$. By Proposition~\ref{prop:proof}, $\Phi$ is an injective mapping between $\KT$ and $\Tspt$. Therefore, we have by Theorem~\ref{th:KTLR} that
\[
 \cspt\leq\tspt.
\]

Now since $\frac{1}{H_{\si}H_{\tau}}>0$, Equation~\eqref{eq:identity_tc} expresses that a sum of nonnegative terms is equal to zero. Therefore all terms in this sum must be equal zero, which proves that $\cspt=\tspt$ and  achieves the proof of Theorem~\ref{th:main}. It also shows that $\Phi$ is a injection between two sets with the same cardinality, therefore $\Phi$ is a bijection, which is the content of Theorem~\ref{th:bij}. \findem

\subsection{Final comments}
We describe first a nice consequence of the bijection $\Phi$. Let the \emph{canonical orientation} of a TFPL $f$ be the oriented TFPL $\can(f)$ obtained by orienting all Bottom paths and Left-Right paths of $f$ from left to right, and all its Closed paths clockwise. If $f\in \Tspt$, then clearly $\can(f)\in \oTspt$. Say that a vertical (oriented) edge {\em even} if its lower vertex is even.

\begin{prop}
\label{prop:csq_bij}
 Let $\si,\tau,\pi\in\Dn$ and $f\in \Tspt$. Then $\size(\si)+\size(\tau)=\size(\pi)$ if and only if $\can(f)$ has neither consecutive left edges nor even vertical edge.
\end{prop}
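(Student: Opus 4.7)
The key observation is that, assuming $\size(\si)+\size(\tau)=\size(\pi)$, the bijection $\Phi$ of Theorem~\ref{th:bij} combined with Proposition~\ref{prop:oTFPLs_to_TFPLs}\eqref{itb} and Lemma~\ref{lem:noloops} gives $\oPhi(P)=\can(f)$, where $P=\Phi^{-1}(f)$. Indeed, Left-Right paths in $\oPhi(P)$ are oriented from left to right by definition of $\mathcal{T}_{\rightarrow}^n(\si,\tau)$, Lemma~\ref{lem:noloops} says the same of its Bottom paths and forbids closed oriented loops altogether, so $\oPhi(P)$ matches the canonical orientation. This identification is the structural core of the proof, and reduces both directions to a local inspection of the ten rules in Figure~\ref{fig:rulesoriented}.

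For the forward direction, once $\oPhi(P)=\can(f)$ is in hand, the plan is to check by direct enumeration through the ten rules of Figure~\ref{fig:rulesoriented} that (i) no rule outputs a vertical edge whose lower endpoint is an even vertex, recalling that $\backslash$-edges of puzzle triangles are mapped to even vertices of $\mathcal{T}^n$ and $/$-edges to odd ones, and (ii) no rule outputs a pair of left-oriented horizontal edges meeting at a common vertex. Both statements are purely local.

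For the converse direction, assume $\can(f)$ has neither consecutive left edges nor even vertical edges. The plan is to construct a puzzle $P\in\KT$ such that $\oPhi(P)=\can(f)$; the existence of such a $P$ together with Theorem~\ref{th:KTLR} then forces $\size(\si)+\size(\tau)=\size(\pi)$. To build $P$, overlay on $\can(f)$ the natural subdivision into unit triangles inherited from the definition of $\oPhi$, and at each unit triangle read off the three puzzle labels from the local oriented configuration of $\can(f)$. Since the ten rules of Figure~\ref{fig:rulesoriented} have pairwise distinct outputs, the labels are uniquely and consistently determined as soon as every local configuration of $\can(f)$ lies in the image of some rule; compatibility across shared horizontal edges of the puzzle is automatic because the common vertical edge of $\mathcal{T}^n$ already determines both sides. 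The hypotheses on $\can(f)$ guarantee this local match: the case analysis from the forward direction shows that the only local configurations not matching any rule are precisely those containing consecutive left edges or an even vertical edge.

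The main obstacle is the delicate case analysis certifying that \emph{consecutive left edges} and \emph{even vertical edges} are the only two obstructions. Concretely one must enumerate, at each inner vertex of $\mathcal{T}^n$, all the ``incoming-outgoing'' pairs allowed by the degree-$2$ condition, and verify in each case that the parity of the vertex together with the presence or absence of the forbidden features precisely separates the configurations reachable via one of the ten rules from those that are not. The asymmetry between odd vertices (coming from $/$-edges) and even vertices (coming from $\backslash$-edges) is what makes this bookkeeping subtle.
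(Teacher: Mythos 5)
Your proposal is correct and follows essentially the same route as the paper: identify $\can(f)$ with $\oPhi(\Phi^{-1}(f))$ (via Lemma~\ref{lem:noloops} and the orientation of Left-Right paths), check the forward direction by inspecting the ten local rules, and prove the converse by reversing the grid correspondence to reconstruct a puzzle $P$ with $\oPhi(P)=\can(f)$, then invoke Theorem~\ref{th:KTLR}. The only difference is that you spell out more explicitly why $\oPhi(P)$ coincides with the canonical orientation, which the paper leaves as a ``byproduct'' of the proof of Theorem~\ref{th:bij}.
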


\dem It is a byproduct of the proof of Theorem~\ref{th:bij} that, when $\size(\si)+\size(\tau)=\size(\pi)$, then $\can(f)$ has the form $\oPhi(P)$ or a puzzle $P\in\KT$. By inspection of the local transformations of Figure~\ref{fig:rulesoriented} applied to $P$, no two consecutive left edges can appear, and no even edge either, which proves the direct implication.

Consider now an element $f\in\oTspt$. We reverse the passage between triangular grid and square grid given in~\ref{sub:defi_phi}: we add one horizontal edge to the left of each Left-Right path, divide by $2$ the length of the bottom edges $e_i$, and rescale the TFPL vertically by a factor $\sqrt{3}$, so that one can superimpose / and $\backslash$ edges of the triangular lattice to odd and even vertices respectively. If $f$ avoids consecutive left edges and even vertical edges, then the fillings of the unit triangles belong to one of the ten possibilities of Figure~\ref{fig:rulesoriented}, and the boundary words of $f$ correspond to the labels from the same figure. Therefore, there exists a puzzle $P\in \KT $ such that $\oPhi(P)=f$. But such puzzles exist only if $\size(\si)+\size(\tau)= \size(\pi)$ by Theorem~\ref{th:KTLR}, which completes the proof.\findem

The preceding proposition thus gives a nice characterization of TFPLs. However the proof of this result is slightly unsatisfying from a combinatorialist's point of view, since one would like a direct proof of this fact by studying the structure of TFPL configurations. This would also give a direct proof of the bijectivity of $\Phi$, that is to say a proof which does not rely on Identity~\eqref{eq:identity_tc}.

In current work together with Ilse Fischer~\cite{TFPL3}, we achieve this by studying oriented TFPL configurations. For a given $k\geq 0$ and $\si,\tau,\pi$ such that $\size(\pi)-\left(\size(\si)+\size(\tau)\right)=k$, we are able to characterize elements $F$ of $\oTspt$ in terms of the occurrence of $k$ specific local sub-configurations in $F$. In the special case of $k=0$, we obtain the result of Proposition~\ref{prop:csq_bij} \emph{for all oriented TFPL configurations}, and not only the canonically oriented ones. It follows that all elements of $\oTspt$ are of the form $\Phi(P)$, from which one deduces easily that all maps in the following commutative diagram are bijections (the unlabeled map is simply the removal of the orientation):

\begin{center}
\includegraphics{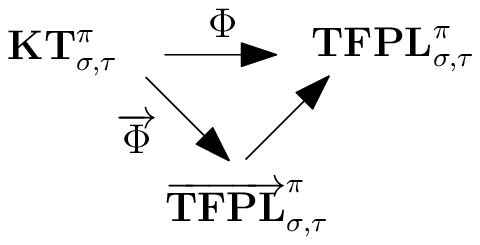}
\end{center}

%

\def\polhk\#1{\setbox0=\hbox{\#1}{{\o}oalign{\hidewidth
  \lower1.5ex\hbox{`}\hidewidth\crcr\unhbox0}}}

\end{document}